\documentclass[12pt]{amsart}
\usepackage{amssymb}
\usepackage{color}
\usepackage{amsmath,epic,curves}
\usepackage[english]{babel}
\usepackage{graphicx}
\usepackage{comment}
\usepackage{appendix}
\pagestyle{plain}

\newtheorem{claim}{}[section]
\newtheorem{theorem}[claim]{Theorem}

\newtheorem{lemma}[claim]{Lemma}
\newtheorem{proposition}[claim]{Proposition}

\renewenvironment{proof}{\noindent{\it Proof. \hskip0pt}}
                      {$\square$\par\medskip}

\textwidth 15.5 true cm
\textheight 23.9 true cm
\hoffset = -1.5 true cm
\voffset = -1.5 true cm
\setlength{\unitlength}{1.0 cm}

\begin{document}
\baselineskip 6.0 truemm
\parindent 1.5 true pc

\newcommand\lan{\langle}
\newcommand\ran{\rangle}
\newcommand\tr{{\text{\rm Tr}}\,}
\newcommand\ot{\otimes}
\newcommand\ol{\overline}
\newcommand\join{\vee}
\newcommand\meet{\wedge}
\renewcommand\ker{{\text{\rm Ker}}\,}
\newcommand\image{{\text{\rm Im}}\,}
\newcommand\id{{\text{\rm id}}}
\newcommand\tp{{\text{\rm tp}}}
\newcommand\pr{\prime}
\newcommand\e{\epsilon}
\newcommand\la{\lambda}
\newcommand\inte{{\text{\rm int}}\,}
\newcommand\ttt{{\text{\rm t}}}
\newcommand\spa{{\text{\rm span}}\,}
\newcommand\conv{{\text{\rm conv}}\,}
\newcommand\rank{\ {\text{\rm rank of}}\ }
\newcommand\re{{\text{\rm Re}}\,}
\newcommand\ppt{\mathbb T}
\newcommand\rk{{\text{\rm rank}}\,}

\newcommand{\bra}[1]{\langle{#1}|}
\newcommand{\ket}[1]{|{#1}\rangle}

\title{Construction of exposed indecomposable positive linear maps
between matrix algebras}

\author{Kil-Chan Ha and Seung-Hyeok Kye}
\address{Faculty of Mathematics and Statistics, Sejong University, Seoul 143-747, Korea}
\address{Department of Mathematics and Institute of Mathematics, Seoul National University, Seoul 151-742, Korea}

\thanks{KCH and SHK were partially supported by NRFK grant 2014-010499 and  NRFK grant 2013-004942 respectively.}

\subjclass{81P15, 15A30, 46L05}

\keywords{exposed indecomposable positive maps, bi-spanning property, separable states,
boundary, full ranks}

\begin{abstract}
We construct a large class of indecomposable positive linear maps from the
$2\times 2$ matrix algebra into the $4\times 4$ matrix algebra,
which generate exposed extreme rays of the convex cone of all
positive maps. We show that extreme points of the dual faces for separable states
arising from these maps are parametrized by the Riemann sphere, and the convex hulls of
the extreme points arising from a circle parallel to the equator have the exactly same properties
with the convex hull of the trigonometric moment curve studied from combinatorial topology.
Any interior points of the dual faces are $2\otimes 4$ boundary separable states with full ranks. 
We exhibit concrete examples of such states.
\end{abstract}

\maketitle

\section{Introduction}

Positive linear maps between matrix algebras are now crucial tools in the theory
of quantum information theory, as entanglement witnesses to distinguish 
entanglement from separability. 
We recall that $\varrho\in M_m\otimes M_n$ is separable  if it belongs to $M_m^+\otimes M_n^+$, 
where $M_m^+$ denotes the cone of all positive semi-definite matrices in $M_m$. A state in $(M_m\otimes M_n)^+$ is called entangled if it is not separable.
We denote by $\mathbb S$ and $\mathbb T$ the convex sets of all separable states and states
with positive partial transposes, respectively.
Even though the PPT criterion \cite{choi-ppt,peres}, stating $\mathbb S\subset\mathbb T$,
is quite strong for distinguishing entanglement, we need to construct
indecomposable positive maps between matrix algebras in order to
detect PPT entangled states, by the duality
between positive maps and separable states \cite{eom-kye,horo-1,terhal}.

If we want to detect a set of PPT entangled states with a non-empty interior in $\mathbb T$,
we need to construct an indecomposable positive map which has the bi-spanning property. See
\cite {ha-kye-optimal}.
Furthermore, every interior point $\varrho$ of the dual face of
$\mathbb S$ given by a positive map with the bi-spanning property is again an interior point
of the convex set $\mathbb T$, and so both $\varrho$ and its partial transpose $\varrho^\Gamma$ have the full ranks.
In this context, it is important to note that indecomposable exposed positive maps must have the bi-spanning properties.
See Chapter 8 of \cite{kye_ritsu}. We say that a positive map is exposed (respectively extremal) 
if it generates an exposed (respectively extreme) ray
of the convex cone of all positive maps.
It should be noted that extremity of an indecomposable positive map is not sufficient
for the bi-spanning property.

In spite of its importance, concrete examples of exposed
indecomposable positive maps are very rare in the literature. In the
$3\otimes 3$ case, some variants \cite{cho-kye-lee} of the Choi map
\cite{choi-lam} were proved to be exposed in \cite{ha-kye_exp_Choi}.
On the other hand, it was shown \cite{chrus_Robert} that the
Robertson's map \cite{robertson-83,robertson} between $M_4$ is also
exposed. Some variants in higher dimensional cases have been also constructed
in \cite{sarbicki_exp}. In the $2\ot 4$ case, there is a single
concrete example given by Woronowicz \cite{woronowicz-1} in the
context of non-extendible positive maps. In this $2\otimes 4$ case, even examples
of just indecomposable positive maps are very rare except those in
\cite{tang}, so far the authors know.

This note is an outcome of the author's efforts to understand the above-mentioned example by Woronowicz.
Motivated by this single example, we construct the following linear map $\Phi[a,b,c,d]$ from $M_2$ into $M_4$
which sends
$\left(\begin{matrix}x&y\\z&w\end{matrix}\right)$ to
$$
\left(
\begin{matrix}
hx-cd(y+z)+kw & -gx+gz  &0&0\\
-gx+gy& ax &z &0\\
0&y &bw&-cz-dw\\
0&0&-cy-dw&ex+fw
\end{matrix}
\right),
$$
for given positive numbers $a,b,c$ and $d$ with $ab>1$, where
positive numbers $e,f,g,h$ and $k$ are determined by the relations
\begin{equation}\label{abcd}
(ab-1)\left(\begin{matrix} e\\f\end{matrix}\right)
=a(c+d)\left(\begin{matrix} c\\d\end{matrix}\right),
\quad
g^2=acd,
\quad
h=be-c^2,\quad
k=bf-d^2.
\end{equation}
If $(a,b,c,d)=(2,2,2,1)$, then we get the example of Woronowicz, where
$4d$ should be replaced by $3d$ in the $(1,1)$-entry.
Throughout this note, we denote just by $\Phi$ for the map $\Phi[a,b,c,d]$.

After we briefly summarize known facts on exposedness and bi-spanning property of a positive map in the next section,
we show in Section 3 that the map $\Phi[a,b,c,d]$ is an exposed indecomposable positive linear map. 
It turns out that extreme points of the dual face $\{\Phi\}^\prime$ in $\mathbb S$ are parametrized by the Riemann sphere.
By the abuse of notation, we just denote by $\{\Phi\}^\prime$ for the face $\{\Phi\}^\prime\cap\mathbb S$
of the convex set $\mathbb S$.
In Section 4, we show that circles which are parallel to the equator in the Riemann sphere behave 
in the exactly same way as the
trigonometric moment curve \cite{gale}
$$
\theta\mapsto (e^{i\theta}, e^{2i\theta}, e^{3i\theta}, e^{4i\theta})\in\mathbb C^4=\mathbb R^8.
$$
The convex hulls of extreme points arising from the horizontal circles turn out to be faces of $\{\Phi\}^\prime$.
In Section 5, we show that if we take interior points of two different faces among those
then their convex sums must be boundary separable states with full ranks. This gives an answer
to the problem raised in \cite{chen_full_sep} in the $2\otimes 4$ case.

\section{Exposed positive maps and bi-spanning property}

For a linear map $\phi:M_m\to M_n$, we associate the Choi matrix $C_\phi$ by
$$
C_\phi=\sum_{i,j=1}^m e_{i,j}\ot \phi(e_{i,j})\in M_m\ot M_n,
$$
where $\{e_{i,j}\}$ denotes the usual matrix units of $M_m$. For $\varrho\in M_m\ot M_n$, we define
the bi-linear pairing
$$
\langle\varrho,\phi\rangle
=\tr (\varrho C_\phi^\ttt),
$$
where $C_{\phi}^\ttt$ denotes
the usual transpose of $C_{\phi}$. If $\varrho$ is a pure product state
$|z\rangle\langle z|$ with $|z\rangle =|x\rangle \otimes |y\rangle \in \mathbb C^m\otimes \mathbb C^n$, then we have
\[
\langle |z\rangle\langle z|,\phi\rangle=\langle z| C_{\phi}^{\rm t}|z\rangle =\langle y|\phi(|\bar x\rangle \langle \bar x|)|y\rangle.
\]
The duality tells us that $\varrho$ is separable if and only if
$\langle\varrho,\phi\rangle\ge 0$ for every positive map $\phi$. For a set $P$ of positive maps, we define
the set $P^\prime$ by
$$
P^\prime=\{\varrho\in \mathbb S: \langle\varrho,\phi\rangle =0\ {\text{\rm for every}}\ \phi\in P\},
$$
which gives rise to an exposed face of the convex set $\mathbb S$. It is not known that if every face of $\mathbb S$
is exposed. For a subset $S\subset\mathbb S$, the dual face $S^\prime$ is defined similarly. 
It turns out that $\phi$ is exposed if and only if $\{\phi\}^{\prime\prime}$ is the nonnegative
scalar multiples of $\phi$. If $\phi$ is an exposed then $\{\phi\}^\prime$ is a maximal face of $\mathbb S$, and
every maximal face of $\mathbb S$ is given by $\{\phi\}^\prime$ for a unique exposed map $\phi$. See Chapter 5 of
\cite{kye_ritsu} for details. 

It is not so easy to show that a given positive map is exposed by definition. 
It was shown in \cite{marcin_exp} that the completely positive map $X\mapsto V^*XV$ 
is an exposed positive map. Woronowicz \cite{woro_letter}
showed that a unital irreducible positive map $\phi$ is exposed if the following condition
\begin{equation}\label{dim_cond}
\dim\spa\{a\otimes h\in M_m^+\ot\mathbb C^n: \phi(a)h=0\}=n\times (m^2-1)
\end{equation}
holds. 
This result has been extended \cite{chrus_exposed} for non-unital positive maps. Especially, it was shown that
every irreducible positive map satisfying the condition (\ref{dim_cond}) is exposed whenever
$\phi(I)$ is nonsingular. Recall that a positive map $\phi$ is said to be irreducible if
$[\phi(A), X]=0$ for every $A\in M_m$ implies that $X$ is a scalar multiple of the identity.
We will use this result to show that the positive map $\Phi[a,b,c,d]$ is exposed.

For a positive map $\phi:M_m\to M_n$, we consider the set of product vectors
$$
P[\phi]=\{|z\rangle = |x\rangle\ot |y\rangle \in\mathbb C^m\ot\mathbb C^n: \langle |z\rangle\langle z|,\phi\ran=0\}.
$$
It turns out that $\{\phi\}^{\prime\prime}$ has no completely positive map if and only if the set $P[\phi]$ spans
the whole space $\mathbb C^m\ot\mathbb C^n$. We say that $\phi$ has the spanning property
\cite{lew00} if $\phi$ satisfies these conditions.
We also say that $\phi$ has the bi-spanning property if both $\phi$ and $\phi\circ\ttt$ have the spanning property,
where $\ttt$ denotes the transpose map.
It turns out that the set
$$
\{\varrho\in\mathbb T: \langle\varrho,\phi\rangle <0\}
$$
of PPT entangled states detected by $\phi$ has a nonempty interior
of $\mathbb T$ if and only if the interior of $\{\phi\}^\prime$ lies
on the interior of $\mathbb T$ if and only if $\phi$ has the
bi-spanning property. Further, every exposed indecomposable positive
map has the bi-spanning property. See Chapter 8 of \cite{kye_ritsu}
for details. It should be noted that the Choi map \cite{choi-lam}
has not the bi-spanning property even though it generates
\cite{ha-ext} an extreme ray of the cone of positive maps. See
\cite{ha-kye-optimal}.

We note that a PPT state $\varrho$ is an interior point of $\mathbb T$ if and only if both $\varrho$ and its partial transpose $\varrho^\Gamma$
have full ranks. See Chapter 6 of \cite{kye_ritsu}. Therefore, if we take a positive map $\phi$ with the bi-spanning property
and take an interior point $\varrho$ of the dual face $\{\phi\}^\prime$ then $\varrho$ must be a boundary separable state,
that is, $\varrho$ is a boundary point of $\mathbb S$.
Furthermore, both $\varrho$ and $\varrho^\Gamma$ have full ranks.

\section{Exposed positive maps}

For each complex number $\alpha$, we consider the rank one matrix
$$
P_\alpha=
\left(\begin{matrix}1&\bar \alpha\\ \alpha&|\alpha|^2\end{matrix}\right)
$$
onto the vector $(1,\alpha)^{\rm t}\in\mathbb C^2$,
and consider the determinant $\Delta_i(\alpha)$ of right-below $i\times i$ submatrix of $\Phi(P_\alpha)$
for each $i=1,2,3,4$. By a direct calculation,
we have
$$
\begin{aligned}
\Delta_1(\alpha)&=e+f|\alpha|^2,\\
\Delta_2(\alpha)&=|\alpha|^2\left[h-cd(\alpha+\bar\alpha)+k|\alpha|^2\right],\\
\Delta_3(\alpha)&=acd|\alpha|^2|1-\alpha|^2,\\
\Delta_4(\alpha)&=0.
\end{aligned}
$$
By the choices of $e,f,g,h$ and $k$ in (\ref{abcd}), it is easy to see that
$$
\Delta_1(\alpha)>0,\qquad
\Delta_2(\alpha)>0,\qquad
\Delta_3(\alpha)>0,\qquad
\Delta_4(\alpha)=0
$$
for any complex numbers $\alpha\neq 0,1$. Therefore, we see that
$\Phi(P_\alpha)$ is positive, that is, positive semi-definite for $\alpha\neq 0,1$. For $\alpha=0,1$,
we have
$$
\Phi(P_0)
=\left(\begin{matrix}
h & -g &0 &0\\
-g&a&0&0\\
0&0&0&0\\
0&0&0&e
\end{matrix}\right)
$$
and
$$
\Phi(P_1)
=\left(
\begin{matrix}
h-2cd+k & 0  &0&0\\
0& a &1 &0\\
0&1 &b&-c-d\\
0&0&-c-d&e+f
\end{matrix}
\right).
$$
For $\alpha=\infty$, we also denote by $P_\infty$ the projection onto the vector $|x_{\infty}\rangle :=(0,1)^{\rm t}\in\mathbb C^2$. Then we also have
$$
\Phi(P_\infty)
=\left(\begin{matrix}
k & 0 &0 &0\\
0&0&0&0\\
0&0&b&-d\\
0&0&-d&f
\end{matrix}\right).
$$
It is clear by the above discussion that the $4\times 4$ matrix $\Phi(P_\alpha)$ is a positive 
matrix with rank three
for each $\alpha\in\mathbb C\cup\{\infty\}$.
Therefore, we see that the map $\Phi[a,b,c,d]$ is positive.
We also see that
the kernel space of $\Phi(P_\alpha)$ is spanned by $|y_\alpha\rangle
\in\mathbb C^4$
defined by
\begin{equation}\label{kernel}
|y_\alpha\rangle=\Bigl (g\alpha (1-\alpha),\
\alpha\left[h-cd(\alpha+\bar\alpha)+k|\alpha|^2\right],
-e-f|\alpha|^2,\
-\bar\alpha (c+d\alpha)\Bigr)^{\rm t}\in\mathbb C^4,
\end{equation}
and the kernel of $\Phi(P_\infty)$ is spanned by $|y_\infty\rangle=(0,1,0,0)^\ttt$.

It is straightforward to see that the image of the identity under
$\Phi$ is nonsingular and $\Phi$ is irreducible. We skip the proof.
In order to show that $\Phi$ satisfies the condition (\ref{dim_cond}),
we consider more general situations. For complex functions $f_1,\dots, f_n$, we consider
the vector
\begin{equation}\label{vec}
(f_1(\alpha), f_2(\alpha),\dots, f_n(\alpha))\in\mathbb C^n.
\end{equation}
Then it is clear that the span of these vectors through $\alpha\in\mathbb C$ has
the same dimension as the vector space spanned by functions $f_1,\dots, f_n$.
The following must be well-known. We include here a simple proof
for the convenience of readers.

\begin{lemma}
The set $\{\alpha^k\bar\alpha^\ell: k,\ell=0,1,2,\dots\}$ of monomials is linearly independent.
\end{lemma}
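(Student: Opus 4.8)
The plan is to reduce the claim to two classical facts: that the exponentials $\{e^{im\theta}:m\in\mathbb Z\}$ are linearly independent, and that a nonzero polynomial in one real variable has only finitely many roots. Suppose we are given a finite linear relation $\sum_{(k,\ell)}c_{k\ell}\,\alpha^k\bar\alpha^\ell=0$ holding for every $\alpha\in\mathbb C$; we must show that every coefficient $c_{k\ell}$ vanishes.

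First I would pass to polar coordinates, writing $\alpha=re^{i\theta}$ with $r\ge 0$ and $\theta\in[0,2\pi)$, so that $\alpha^k\bar\alpha^\ell=r^{k+\ell}e^{i(k-\ell)\theta}$. The relation then reads $\sum_{(k,\ell)}c_{k\ell}\,r^{k+\ell}e^{i(k-\ell)\theta}=0$ for all $r$ and $\theta$. The key observation is that the exponent $k-\ell$ of the angular factor separates the terms by frequency: grouping the monomials according to the value $m=k-\ell$ rewrites the relation as $\sum_m\bigl(\sum_{k-\ell=m}c_{k\ell}r^{k+\ell}\bigr)e^{im\theta}=0$.

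Next, for each fixed $r$ I would extract the individual frequency components by multiplying by $e^{-im\theta}$ and integrating over $\theta\in[0,2\pi]$, using the orthogonality relation $\int_0^{2\pi}e^{i(m-m')\theta}\,d\theta=2\pi\delta_{mm'}$. This forces $\sum_{k-\ell=m}c_{k\ell}r^{k+\ell}=0$ for every $r\ge 0$ and every integer $m$. For a fixed $m$, as $\ell$ varies the exponents $k+\ell=m+2\ell$ are pairwise distinct, so the left-hand side is a genuine polynomial in $r$; since it vanishes on an infinite set of values of $r$, all of its coefficients $c_{k\ell}$ must be zero. Letting $m$ range over all integers then yields $c_{k\ell}=0$ for every pair $(k,\ell)$, which is the asserted linear independence.

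There is no serious obstacle here: the argument is entirely routine once the monomials are sorted by the angular frequency $k-\ell$. The only point requiring a moment's care is to confirm that within a single frequency block the radial exponents $k+\ell$ are genuinely distinct, so that the vanishing of the radial sum is really the vanishing of polynomial coefficients rather than some nontrivial cancellation among equal powers of $r$.
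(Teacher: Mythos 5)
Your proof is correct. It takes a route that is recognizably different from the paper's, though dual to it in a pleasant way. The paper first separates the relation into homogeneous components by total degree $k+\ell$, using the substitution $\alpha\mapsto t\alpha$ and viewing the result as a polynomial in $t$; it then disposes of each homogeneous piece by restricting to the unit circle, multiplying by $\alpha^n$ to clear the $\bar\alpha$'s, and invoking the identity theorem for analytic functions. You instead separate first by the angular frequency $m=k-\ell$, using Fourier orthogonality of $\{e^{im\theta}\}$ on the circle, and then dispose of each frequency block by observing that the radial exponents $k+\ell=m+2\ell$ within a block are distinct, so the vanishing of a one-variable polynomial at infinitely many $r$ kills all coefficients. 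In short, the paper splits along $k+\ell$ and you split along $k-\ell$. Your version is slightly more self-contained, since it replaces the identity theorem for analytic functions by the elementary orthogonality integral and the finiteness of roots of a polynomial; the paper's version is a bit terser because the scaling trick packages the degree separation in one line. Your closing remark correctly identifies the one point needing care, namely that the exponents $k+\ell$ are pairwise distinct within a fixed frequency block, so no further changes are needed.
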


\begin{proof}
Let $P(\alpha)$ be a linear combination of finitely many monomials. Then $P(t\alpha)$
is a polynomial in $t$ whose coefficients are homogeneous polynomials in $\alpha$ and $\bar\alpha$.
Therefore, it suffices to show that homogeneous polynomials are linearly independent.
This can be seen by the identity theorem of analytic functions on the unit circle after 
multiplying $\alpha^n$ to all of homogeneous polynomials of degree $n$.\color{black}
\end{proof}

In order to find the dimension of the span of the vectors in (\ref{vec}) through $\alpha\in\mathbb C$,
it suffices to consider the rank of the \lq coefficient matrix\rq. For example,
the coefficient matrix for $|y_\alpha\rangle$ is given by
$$
\left(
\begin{matrix}
\cdot &g &\cdot &-g &\cdot &\cdot &\cdot &\cdot &\cdot &\cdots\\
\cdot &h &\cdot &-cd &-cd &\cdot &\cdot &k &\cdot &\cdots\\
-e &\cdot &\cdot &\cdot &-f &\cdot &\cdot &\cdot &\cdot &\cdots\\
\cdot &\cdot  &-c  & \cdot &-d &\cdot &\cdot &\cdot &\cdot &\cdots
\end{matrix}
\right)
$$
with respect to monomials $1, \alpha, \bar\alpha, \alpha^2, \alpha\bar\alpha,\bar\alpha^2,\alpha^3,\alpha^2\bar\alpha,\dots$.
This is of rank $4$, which means that the set $\{|y_\alpha\rangle:\alpha\in\mathbb C\}$ spans the whole space $\mathbb C^4$.
The entries of the vector
$$
P_\alpha\ot |y_\alpha\rangle=(1,\alpha,\bar\alpha,\alpha\bar\alpha)^\ttt\ot |y_\alpha\rangle\in\mathbb C^{16}
$$
are linear combinations of the following $12$ monomials
$$
1,\
\alpha,\
\bar\alpha,\
\alpha^2,\
\alpha\bar\alpha,\
\bar\alpha^2,\
\alpha^3,\
\alpha^2\bar\alpha,\
\alpha\bar\alpha^2,\
\alpha^3\bar\alpha,\
\alpha^2\bar\alpha^2,\
\alpha^3\bar\alpha^2.
$$
It is easily checked that the coefficient matrix is of rank $12$. Therefore, we have the following:

\begin{theorem}\label{thm_map}
For positive real numbers $a,b,c$ and $d$ with $ab>1$, the map $\Phi[a,b,c,d]$
is an exposed indecomposable positive linear map from $M_2$ into $M_4$.
\end{theorem}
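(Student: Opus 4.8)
The plan is to establish the three assertions in turn---positivity, exposedness, and indecomposability---by assembling the computations already displayed above.

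For positivity I would first observe that $\Phi$ preserves Hermiticity, so it suffices to verify $\Phi(A)\ge 0$ for $A\in M_2^+$. Every positive semi-definite matrix in $M_2$ is a nonnegative combination of the rank-one projections $P_\alpha$ with $\alpha\in\mathbb C\cup\{\infty\}$, and the principal minors $\Delta_1,\Delta_2,\Delta_3,\Delta_4$ computed above show that each $\Phi(P_\alpha)$ is positive of rank three. Writing $A=\sum_i\lambda_i P_{\alpha_i}$ with $\lambda_i\ge 0$ and applying linearity then gives $\Phi(A)=\sum_i\lambda_i\Phi(P_{\alpha_i})\ge 0$, so $\Phi$ is positive.

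For exposedness I would invoke the Chru\'sci\'nski extension of Woronowicz's criterion recalled in Section~2: an irreducible positive map $\phi$ with $\phi(I)$ nonsingular is exposed once the dimension condition (\ref{dim_cond}) holds. Irreducibility of $\Phi$ and nonsingularity of $\Phi(I)$ are recorded above, so the task reduces to confirming $\dim\spa\{a\ot h\in M_2^+\ot\mathbb C^4:\Phi(a)h=0\}=4\times(2^2-1)=12$. Here the extreme rays of $M_2^+$ are the $P_\alpha$, and for each $\alpha$ the kernel of $\Phi(P_\alpha)$ is the line spanned by $|y_\alpha\rangle$; thus the relevant product vectors are exactly the $P_\alpha\ot|y_\alpha\rangle$ together with $P_\infty\ot|y_\infty\rangle$. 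Expressing the entries of $P_\alpha\ot|y_\alpha\rangle$ as combinations of the twelve monomials listed above, and using the Lemma that these monomials are linearly independent, the dimension of the span equals the rank of the coefficient matrix, which is $12$; this matches $n\times(m^2-1)$, so (\ref{dim_cond}) holds and $\Phi$ is exposed.

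For indecomposability I would argue through extremality. An exposed ray is extreme, so $\Phi$ generates an extreme ray of the cone of positive maps. If $\Phi$ were decomposable it would lie in the decomposable subcone, hence would be extremal there as well; since the extreme rays of the decomposable cone are precisely the rank-one completely positive maps $X\mapsto V^*XV$ and the rank-one completely copositive maps $X\mapsto V^*X^\ttt V$, extremality would force $\Phi$ to be of one of these two forms. But then $\Phi(P_\alpha)=V^*P_\alpha V$ or $V^*P_\alpha^\ttt V$ would have rank at most one, contradicting the rank-three computation above. Hence $\Phi$ is indecomposable, completing the proof.

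The main obstacle is the rank computation underlying (\ref{dim_cond}): one must check that the coefficient matrix of $P_\alpha\ot|y_\alpha\rangle$ against the twelve monomials has full rank $12$, and that higher-rank $a\in M_2^+$ enlarge the span no further than the rank-one $P_\alpha$ already do. Pinning down the exact value $12=n(m^2-1)$, rather than merely a lower bound, is the delicate point, and it is the linear independence of monomials that reduces this to a finite, checkable rank calculation.
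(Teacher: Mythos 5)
Your proposal is correct and follows essentially the same route as the paper: positivity via the minors $\Delta_i(\alpha)$ of $\Phi(P_\alpha)$ on the extreme rays of $M_2^+$, exposedness via the Chru\'sci\'nski--Woronowicz criterion with the rank-$12$ coefficient-matrix computation for condition (\ref{dim_cond}), and indecomposability by noting that an exposed decomposable map would have to be a rank-one (co)completely positive map. The only cosmetic difference is that you derive the final contradiction from $\rk\Phi(P_\alpha)=3$ rather than from the rank of the Choi matrix, and you rightly flag the need to check that rank-two elements of $M_2^+$ contribute nothing to the span in (\ref{dim_cond}) (they do not, since the kernels of $\Phi(P_\alpha)$ for distinct $\alpha$ intersect trivially).
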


\begin{proof}
It remains to show that the map $\Phi$ is indecomposable. If it is decomposable, then it must be of the form
$$
X\mapsto V^*XV,\qquad {\text{\rm or}}\qquad X\mapsto V^*X^\ttt V,
$$
since it is exposed. Therefore, the Choi matrix (respectively the partial transpose of Choi matrix) must be of rank one
in the former case (respectively the latter case). But, this is not the case for the map $\Phi$.
\end{proof}

\section{Faces for separable states}

Now, we see that the dual face $\{\Phi\}^\prime$ of $\Phi$ is a maximal face of the convex set $\mathbb S$
whose interior is contained in the interior of $\mathbb T$, and any interior points of $\{\Phi\}^\prime$
are boundary separable states with full ranks. To get concrete examples, we look for specific
faces of $\{\Phi\}^\prime$.

First of all, extreme points of the face $\{\Phi\}^\prime$ are parametrized by the Riemann sphere $\mathbb C\cup\{\infty\}$:
These are pure product states given by pure product vectors
$$
|z_\alpha \rangle =|x_\alpha\rangle\ot|y_\alpha\rangle,\qquad \alpha\in\mathbb C\cup\{\infty\},
$$
where $|x_\alpha\rangle=(1,\bar \alpha)^\ttt,\, |x_\infty\rangle=(0,1)^\ttt\in\mathbb C^2$, and $|y_\alpha\rangle$ is given by (\ref{kernel}).
For a fixed positive number $r>0$, we consider the product vectors which come from the circle $C_r$ centered at the origin with radius $r$:
$$
P_r=\{|z_\alpha\rangle: |\alpha|=r\}.
$$
This represents extreme points of $\{\Phi\}^\prime$ arising from the horizontal circle of the Riemann sphere.
We also denote by $P_r^\Gamma$ the set of partial conjugates $|\bar x_\alpha\rangle \otimes 
|y_\alpha\rangle$'s of product vectors $|x_\alpha\rangle\otimes |y_\alpha\rangle$ in $P_r$.

\begin{lemma}\label{four_vec}
For a fixed $r>0$,
any four mutually distinct vectors $|y_\alpha\rangle \in\mathbb C^4$ with $|\alpha|=r$ are linearly independent.
\end{lemma}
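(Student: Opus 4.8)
The plan is to use the constraint $|\alpha|=r$ to turn the components of $|y_\alpha\rangle$ into honest functions of the single variable $\alpha$, and then to recognize the resulting $4\times 4$ coordinate matrix as a product of a fixed coefficient matrix with a Vandermonde matrix.

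First I would set $|\alpha|^2=r^2$ and $\bar\alpha=r^2/\alpha$ (legitimate since $r>0$ forces $\alpha\neq 0$) in the expression (\ref{kernel}). A short computation then rewrites the four entries of $|y_\alpha\rangle$ as
\begin{align*}
y_1(\alpha)&=g\alpha-g\alpha^2, & y_2(\alpha)&=-cdr^2+(h+kr^2)\alpha-cd\alpha^2,\\
y_3(\alpha)&=-(e+fr^2), & y_4(\alpha)&=-dr^2-cr^2\alpha^{-1},
\end{align*}
so that every entry lies in the span of the four functions $\alpha^{-1},1,\alpha,\alpha^2$.

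Given four distinct numbers $\alpha_1,\dots,\alpha_4$ on the circle, I would form the $4\times 4$ matrix $M$ whose $j$-th column is $|y_{\alpha_j}\rangle$ and factor it as $M=A\,V$, where $A$ is the constant coefficient matrix reading off the coefficients of $y_1,\dots,y_4$ with respect to the monomials $\alpha^{-1},1,\alpha,\alpha^2$, and $V$ is the matrix whose $(m,j)$ entry is the $m$-th monomial evaluated at $\alpha_j$. Then $\det M=\det A\cdot\det V$, and linear independence of the four vectors is equivalent to $\det M\neq0$. The factor $\det V$ is harmless: pulling $\alpha_j^{-1}$ out of the $j$-th column turns $V$ into the ordinary Vandermonde matrix in $\alpha_1,\dots,\alpha_4$, so $\det V=\left(\prod_j\alpha_j^{-1}\right)\prod_{i<j}(\alpha_j-\alpha_i)$, which is nonzero because the $\alpha_j$ are distinct and nonzero.

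The real content, and the step I expect to be the main obstacle, is to check that the constant $\det A$ does not vanish. Because $A$ has many zero entries, a cofactor expansion collapses quickly and yields $\det A=-c\,g\,r^2(e+fr^2)(h+kr^2-cd)$. Here $c,g,e,f,r^2$ are all positive, so everything reduces to showing $h+kr^2-cd\neq0$. Substituting $h=be-c^2$, $k=bf-d^2$ and the values of $e,f$ forced by (\ref{abcd}), namely $e=ac(c+d)/(ab-1)$ and $f=ad(c+d)/(ab-1)$, and clearing the denominator $ab-1>0$, I expect the numerator to simplify to
$$
abcd\,r^2+c^2+cd+d^2r^2,
$$
which is strictly positive. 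Hence $\det A\neq0$, giving $\det M\neq0$ and proving the lemma. The only delicate point is the bookkeeping in this last simplification, where the $ab$ cross terms must cancel exactly so that the surviving expression is manifestly positive.
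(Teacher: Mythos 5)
Your proposal is correct, and every computation in it checks out: with $\bar\alpha=r^2/\alpha$ the four entries of $|y_\alpha\rangle$ do lie in the span of $\alpha^{-1},1,\alpha,\alpha^2$, the cofactor expansion of your coefficient matrix $A$ (single nonzero entry in the first column, then a single nonzero entry in a row of the resulting minor) gives $\det A=-c\,g\,r^2(e+fr^2)(h+kr^2-cd)$, and $(ab-1)(h+kr^2-cd)=abcd\,r^2+c^2+cd+d^2r^2>0$, so $\det A\neq 0$. The paper proves the same nonvanishing of the same $4\times 4$ determinant, but by exhibiting its closed form directly: with $\alpha_k=re^{i\theta_k}$ it states
$\det(M)=K\,e^{i(\theta_1+\theta_2+\theta_3+\theta_4)/2}\prod_{i<j}\sin\bigl((\theta_i-\theta_j)/2\bigr)$
with an explicit positive constant $K$, and concludes. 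The two arguments are really the same determinant organized differently: your Vandermonde factor $\prod_{i<j}(\alpha_j-\alpha_i)$ with $\alpha_j=re^{i\theta_j}$ is exactly where the product of sines comes from, and your $\det A$ together with $\prod_j\alpha_j^{-1}$ accounts for the constant $K$ and the phase. What your route buys is a hand-checkable, structural explanation of why the determinant factors this way (the entries are Laurent polynomials spanning exactly four monomials), whereas the paper's version requires trusting or reproducing a closed-form evaluation that was presumably done by computer algebra. The paper's version, on the other hand, hands you the constant $K$ explicitly, which makes the positivity check a matter of inspection. Either way the lemma is established; your one flagged ``delicate point'' (the cancellation of the $ab$ cross terms) does go through exactly as you predicted.
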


\begin{proof}
We take $\alpha_k=r e^{i \theta_k}$ for real numbers $\theta_k$ with $k=1,2,3,4$, and denote by $M$
the $4\times 4$ matrix whose $k$-row is the transpose of $|y_\alpha\rangle$. Then we have
\[
\text{det}(M)=K e^{i \frac 1 2 (\theta_1+\theta_2+\theta_3+\theta_4)}\prod_{1\le i<j\le 4}\sin\bigr(\dfrac{\theta_i-\theta_j}{2}\bigr)
\]
with
\[
K=\dfrac{64 a c \sqrt {acd}\, r^4(c+d)(c+d\, r^2)\bigr(c(c+d)+d(abc+d)r^2\bigr)}{(ab-1)^2}.
\]
Therefore, $|y_{\alpha_1}\rangle,|y_{\alpha_2}\rangle,|y_{\alpha_3}\rangle,|y_{\alpha_4}\rangle$ are linearly independent for distinct $\alpha_k$'s.
\end{proof}

By Proposition 3.1 in \cite{ha-kye_multi}, we see that
the convex hull of $|z\rangle\langle z|$'s with four mutually distinct product vectors $|z\rangle\in P_r$ is a face of $\{\Phi\}^\prime$
which is affinely isomorphic to the $3$-dimensional simplex with four extreme points.
We also note that
any five mutually distinct product vectors from $P_r$ are linearly independent by Theorem 3.2 \cite{ha-kye_multi}.
Note that all the entries of product vectors in $P_r$ are expressed by polynomials in terms of
$\bar\alpha^2,\bar\alpha, 1,\alpha,\alpha^2$, and so we see that the span of $P_r$ is a $5$-dimensional subspace of
$\mathbb C^2\ot\mathbb C^4$. In the case of $P_r^\Gamma$,
the entries are given by polynomials in $\bar\alpha, 1,\alpha,\alpha^2,\alpha^3$, and so the same is true.

We denote by $\mathbb F_r$ the convex hull of pure product states $|z\rangle\langle z|$ with $|z\rangle\in P_r$.
It is well known that every face of the convex set $\mathbb T$ is of the form
$$
\tau(D,E)=\{\varrho\in\mathbb T: {\mathcal R}\varrho\subset D, {\mathcal R}\varrho^\Gamma\subset E\}
$$
for a given pair $(D,E)$ of subspaces in $\mathbb C^m\ot\mathbb C^n$.
See Section 6 of \cite{kye_ritsu}. The next result shows that the convex set $\mathbb F_r$ is a face of
$\{\Phi\}^\prime\subset \mathbb S$.

\begin{theorem}
The convex set $\mathbb F_r$ coincides with $\tau(\spa P_r,\spa P_r^\Gamma)$.
\end{theorem}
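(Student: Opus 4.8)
The plan is to prove the two inclusions separately; the inclusion $\mathbb F_r\subset\tau(\spa P_r,\spa P_r^\Gamma)$ is routine and the reverse inclusion carries all of the content. For the easy direction, each generating pure product state $|z\rangle\langle z|$ with $|z\rangle=|x_\alpha\rangle\ot|y_\alpha\rangle\in P_r$ is separable and hence lies in $\mathbb T$; its range is spanned by $|z\rangle\in\spa P_r$, while the range of its partial transpose $(|z\rangle\langle z|)^\Gamma$ is spanned by the partial conjugate $|\bar x_\alpha\rangle\ot|y_\alpha\rangle\in\spa P_r^\Gamma$. Since the range of a sum of positive semi-definite matrices is the span of the individual ranges, both range conditions and membership in $\mathbb T$ are inherited by every convex combination, so $\mathbb F_r\subset\tau(\spa P_r,\spa P_r^\Gamma)$. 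It then remains to prove that every $\varrho\in\tau(\spa P_r,\spa P_r^\Gamma)$ lies in $\mathbb F_r$.

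I would reduce the reverse inclusion to two ingredients. The first is a characterization of the available product vectors: I claim that a product vector $|x\rangle\ot|y\rangle$ with $|x\rangle\ot|y\rangle\in\spa P_r$ and $|\bar x\rangle\ot|y\rangle\in\spa P_r^\Gamma$ must be a scalar multiple of some $|z_\alpha\rangle$ with $|\alpha|=r$. This is a finite computation. Writing $\spa P_r$ through the $8\times 5$ coefficient matrix in the monomials $1,\alpha,\alpha^2,\bar\alpha,\bar\alpha^2$ (with $\alpha\bar\alpha=r^2$) yields three explicit linear equations cutting out $\spa P_r$, and similarly for $\spa P_r^\Gamma$ in the monomials $1,\alpha,\alpha^2,\alpha^3,\bar\alpha$. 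Imposing these equations on a general product vector $(1,\mu)^\ttt\ot|y\rangle$ (and separately on $(0,1)^\ttt\ot|y\rangle$) gives polynomial identities in $\mu,\alpha,\bar\alpha$ that force $|x\rangle\ot|y\rangle$ to be proportional to $|z_\alpha\rangle$ for some $\alpha$ with $|\alpha|=r$; the degenerate parameters $\alpha=0,\infty$ are excluded since $r>0$.

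The second ingredient is separability: I would show that every $\varrho\in\tau(\spa P_r,\spa P_r^\Gamma)$ is in fact separable. Granting this, the conclusion is immediate. A separable $\varrho$ admits a decomposition $\varrho=\sum_i\lambda_i\,|x_i\rangle\langle x_i|\ot|y_i\rangle\langle y_i|$ with $\lambda_i>0$, whose product vectors $|x_i\rangle\ot|y_i\rangle$ lie in ${\mathcal R}\varrho\subset\spa P_r$ and whose partial conjugates lie in ${\mathcal R}\varrho^\Gamma\subset\spa P_r^\Gamma$. By the first ingredient each $|x_i\rangle\ot|y_i\rangle$ is a multiple of some $|z_\alpha\rangle$ with $|\alpha|=r$, so $\varrho\in\mathbb F_r$, completing the reverse inclusion. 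The mechanism I would use to establish separability is the edge-state reduction: locate a product vector $|z_\alpha\rangle\in{\mathcal R}\varrho$ whose partial conjugate lies in ${\mathcal R}\varrho^\Gamma$, subtract the largest multiple $\lambda\,|z_\alpha\rangle\langle z_\alpha|$ for which $\varrho-\lambda\,|z_\alpha\rangle\langle z_\alpha|$ stays PPT with both range inclusions, and iterate; each step lowers $\rank\varrho+\rank\varrho^\Gamma$, so the process terminates and exhibits $\varrho$ as a convex combination of product states from $P_r$.

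The main obstacle is precisely this second ingredient, namely showing that $\tau(\spa P_r,\spa P_r^\Gamma)$ contains no PPT entangled (edge) states, equivalently that at every stage of the reduction the current range ${\mathcal R}\varrho$ still contains some $|z_\alpha\rangle$ with partial conjugate in ${\mathcal R}\varrho^\Gamma$. This is where the moment-curve analogy of Section 4 is indispensable: because $\spa P_r$ is only $5$-dimensional while any four of the $|y_\alpha\rangle$ are independent by Lemma \ref{four_vec} and any five product vectors of $P_r$ are independent by Theorem 3.2 of \cite{ha-kye_multi}, the product vectors meeting a $k$-dimensional subrange behave exactly like the vertices on a face of the cyclic polytope. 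I expect to combine this combinatorial constraint with the explicit parametrization (\ref{kernel}) to guarantee that a usable product vector is always present until $\varrho$ is exhausted, thereby ruling out edge states and forcing separability.
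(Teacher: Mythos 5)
Your easy inclusion and your first ingredient are sound and match the paper: the paper also reduces the identification of extreme points to showing that a product vector lying in $\spa P_r$ whose partial conjugate lies in $\spa P_r^\Gamma$ must be proportional to some $|z_\alpha\rangle$ with $|\alpha|=r$, and it carries out exactly the finite computation you describe (it exhibits explicit bases $|\zeta_i^r\rangle$ and $|\eta_i^r\rangle$ of the two orthogonal complements and solves the resulting six linear conditions). The genuine gap is your second ingredient. You propose to prove that every state in $\tau(\spa P_r,\spa P_r^\Gamma)$ is separable by an edge-state subtraction, and you yourself identify the crucial step --- that at every stage of the iteration the current range still contains a product vector $|z_\alpha\rangle$ whose partial conjugate lies in the range of the partial transpose --- as the ``main obstacle,'' offering only the expectation that the moment-curve combinatorics will supply it. But that step \emph{is} the assertion that the face contains no PPT entangled edge state, i.e., it is the entire content of the separability claim; the cyclic-polytope behaviour of $P_r$ (Lemma \ref{four_vec} and the independence of any five vectors of $P_r$) constrains which product vectors exist in $\spa P_r$, but says nothing about whether the range of an arbitrary PPT state in the face must contain one of them. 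As written the reduction could stall, and no argument is given that it does not.

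The paper closes this gap by a different and much shorter route, which is the key idea you are missing. Since $\spa P_r$ and $\spa P_r^\Gamma$ are both $5$-dimensional, any boundary point $\varrho$ of the face $\tau(\spa P_r,\spa P_r^\Gamma)$ of $\mathbb T$ satisfies $\rk\varrho\le 4$ or $\rk\varrho^\Gamma\le 4$, and such low-rank PPT states are automatically separable by known theorems: by \cite{p-horo,woronowicz} when supported in $\mathbb C^2\ot\mathbb C^3$, and by \cite{2xn} when supported in $\mathbb C^2\ot\mathbb C^4$. Since every point of the compact convex set $\tau(\spa P_r,\spa P_r^\Gamma)$ is a convex combination of boundary points, the whole face is separable, and your first ingredient then identifies its extreme points with the pure product states from $P_r$. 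Replacing your edge-state iteration by this rank argument repairs the proof.
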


\begin{proof}
We first note that both $\spa P_r$ and $\spa P_r^\Gamma$ are of $5$-dimensional. If $\varrho\in\mathbb T$ is a boundary point 
of $\tau(\spa P_r,\spa P_r^\Gamma)$ then
either $\rk\varrho\le 4$ or $\rk\varrho^\Gamma\le 4$. If this is supported in $\mathbb C^2\ot\mathbb C^3$ then it is separable by
\cite{p-horo,woronowicz}, and if it is supported in $\mathbb C^2\ot\mathbb C^4$ then it is also separable by \cite{2xn}.
Therefore, we see that every PPT state in $\tau(\spa P_r,\spa P_r^\Gamma)$ is separable.

It remains to show that every extreme point in  $\tau(\spa P_r,\spa P_r^\Gamma)$ arises from a product vector in $P_r$.
It suffices to show the following: If a product vector  $|w\rangle=(p_1,p_2)^{\rm t}\otimes (q_1,q_2,q_3,q_4)^{\rm t}$ belongs to $\spa P_r$ and
its partial conjugate $|w\rangle^\Gamma$ belongs to $\spa P_r^\Gamma$, then $|w\rangle=|z_\alpha\rangle$ for $\alpha\in P_r$ up to scalar multiplication.
To do this, we search for a basis of the orthogonal complements of $\spa P_r$:
\begin{equation}\label{pr_perp}
\begin{aligned}
|\zeta_1^r\rangle=&\Bigl(0,\,0,\,\dfrac{d r^2}{c},\,-\dfrac{e+f r^2}{c},\,0,\,0,\,1,\,0\Bigr)^{\rm t},\\
|\zeta_2^r\rangle=&\Bigl(\dfrac{c^2d^2r^2}{g u},\,-\dfrac{cd r^2}{u},\,-\dfrac{r^2\bigl[c^2 u-b(e+fr^2)u-c^2 d^2 r^2\bigr]}{(e+fr^2)u},
\,-dr^2,\,0,\,1,\,0,\,0\Bigr)^{\rm t},\\
|\zeta_3^r\rangle=&\Bigl(\dfrac{cdr^2}{u},\,-\dfrac{gr^2}{u},\,\dfrac{gr^2(u+cdr^2)}{(e+fr^2)u},\,0,\,1,\,0,\,0,\,0\Bigr)^{\rm t}
\end{aligned}
\end{equation}
with $u:=c^2+cd+d^2 r^2-b(e+f r^2)\neq 0$. We also see that
\begin{equation}\label{prg_perp}
\begin{aligned}
|\eta_1^r\rangle=&\Bigl(\dfrac{cd^2r^2}{gu},\,-\dfrac{dr^2}{u},\,-\dfrac{cr^2(u-d^2 r^2)}{(e+fr^2)u},\,0,\,0,\,0,\,0,\,1\Bigr)^{\rm t},\\
|\eta_2^r\rangle=&\Bigl(\dfrac{c d(e+f r^2)}{g u},\,-\dfrac{e+f r^2}{u},\,\dfrac{cdr^2}{u},\,0,\,0,\,0,\,1,\,0\Bigr)^{\rm t},\\
|\eta_3^r\rangle=&\Bigl(-\dfrac{u^2-u c d-c^2 d^2 r^2}{u},\,-\dfrac{u+cdr^2}{u},\,\dfrac{cdr^2(u+cd r^2)}{(e+fr^2)u},\,0,\,-\dfrac{cd}g,\,1,\,0,\,0\Bigr)^{\rm t},
\end{aligned}
\end{equation}
span the orthogonal complement of $P_r^{\Gamma}$.
It remains to solve the equation
$$
\langle \zeta_i^r|w\rangle=0,\qquad
\langle \eta_i^r|w^{\Gamma}\rangle=0,\qquad i=1,2,3,
$$
and one can show that any solution $|w\rangle$ is a scalar multiple of a product vector in $P_r$
by a straightforward calculation.
\end{proof}

We interpret the results in a geometric way.
First of all, the convex set $\mathbb F_r$ is of $8$ affine dimensional. In fact, entries of the pure product state $|z_\alpha\rangle\langle z_\alpha|
\in M_8$ is the linear combinations of $\bar\alpha^{k}$ and $\alpha^k$ with $k=0, 1, 2, 3, 4$, whenever $|\alpha|=r$. Furthermore, we see that
any distinct $9$ choices of $\alpha$'s with $|\alpha|=r$ give rise to linearly independent pure product states by Proposition 3.3 \cite{ha-kye_multi}.
The extreme points of the face $\mathbb F_r$ are parametrized by
the circle. If we take any five extreme points then their convex combination with positive coefficients is an interior point of $\mathbb F_r$. If we take
any $k$ extreme points with $k\le 4$ then their convex hull is a nontrivial face of $\mathbb F_r$, and every face of $\mathbb F_r$ arises
in this way. Therefore, the circle consisting of extreme points behaves
in the exactly same way as the trigonometric moment curve. See \cite{kye_trigono}.
We may consider the circle of the Riemann sphere which are perpendicular to the equator, and get the exactly same results.
We suspect the same is true for any circles in the Riemann sphere.

\section{boundary separable states with full ranks}

In this section, we consider two faces $\mathbb F_r$ and $\mathbb F_s$ for $r\neq s$. First of all, these two faces
have no intersection, because they are faces of a single convex set $\{\Phi\}^\prime$ and have no common extreme points.

\begin{proposition}\label{jgjghj}
For $r\neq s$, the intersection of the $\spa P_r$ and $\spa P_s$ is of $2$-dimensional subspace generated by
the following two product vectors
$$
\zeta_4:=(0,\,1)^{\rm t}\otimes (0,\,0,\,0,\,1)^{\rm t},\quad
\zeta_5:=(1,\,0)^{\rm t}\otimes (g,\, cd,\,0,\,0)^{\rm t}.
$$
On the other hand, $\spa P_r^{\Gamma}\cap \spa P_s^{\Gamma}$ is spanned by
$$
\eta_4:=(0,\,1)^{\rm t}\otimes  (g,\, cd,\,0,\,0)^{\rm t},\quad
\eta_5:=(1,\,0)^{\rm t}\otimes(0,\,0,\,0,\,1)^{\rm t}.
$$
\end{proposition}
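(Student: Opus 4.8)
The two claims have identical proofs, so I would treat $\spa P_r\cap\spa P_s$ in detail and obtain the partial-conjugate statement by repeating the argument with (\ref{prg_perp}) in place of (\ref{pr_perp}). Two $5$-dimensional subspaces of $\mathbb C^8$ generically meet in dimension $5+5-8=2$, so the content of the proposition is that this value is attained and that the intersection is exactly $\spa\{\zeta_4,\zeta_5\}$. The inclusion $\supseteq$ is the easy half: writing $\zeta_4=(0,0,0,0,0,0,0,1)^{\rm t}$ and $\zeta_5=(g,cd,0,0,0,0,0,0)^{\rm t}$, I would check that both are orthogonal to each of $|\zeta^r_1\rangle,|\zeta^r_2\rangle,|\zeta^r_3\rangle$ and to each of $|\zeta^s_1\rangle,|\zeta^s_2\rangle,|\zeta^s_3\rangle$. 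Since $\spa P_r=\{|\zeta^r_1\rangle,|\zeta^r_2\rangle,|\zeta^r_3\rangle\}^\perp$, this puts $\zeta_4,\zeta_5$ into $\spa P_r\cap\spa P_s$. Both checks are immediate from (\ref{pr_perp}): $\langle\zeta^r_i|\zeta_4\rangle$ is the eighth entry of $|\zeta^r_i\rangle$, which is $0$, and $\langle\zeta^r_i|\zeta_5\rangle=g\,\zeta^r_i[1]+cd\,\zeta^r_i[2]$ vanishes entrywise. As $\zeta_4,\zeta_5$ are visibly independent, $\dim(\spa P_r\cap\spa P_s)\ge 2$.

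For the reverse inclusion I would pass to orthogonal complements, $(\spa P_r\cap\spa P_s)^\perp=(\spa P_r)^\perp+(\spa P_s)^\perp$, so that it suffices to prove the six vectors $|\zeta^r_i\rangle,|\zeta^s_i\rangle$ ($i=1,2,3$) are linearly independent. The key is the triangular shape of (\ref{pr_perp}): $|\zeta^r_3\rangle,|\zeta^r_2\rangle,|\zeta^r_1\rangle$ carry their only pivot $1$ in entries $5,6,7$ respectively (the same for $s$), while the eighth entry is always $0$. Reading off entries $5,6,7$ of a relation $\sum_i(a_i|\zeta^r_i\rangle+b_i|\zeta^s_i\rangle)=0$ forces $b_i=-a_i$, so the question collapses to the linear independence of the three differences $|\zeta^r_i\rangle-|\zeta^s_i\rangle$, $i=1,2,3$.

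These differences are supported on entries $1,2,3,4$, and the identity $g\,\zeta^r_i[1]+cd\,\zeta^r_i[2]=0$ (orthogonality to $\zeta_5$, inherited by the differences) slaves the second entry to the first; so they really lie in the three coordinates $(1,3,4)$ and their independence is the non-vanishing of a single $3\times3$ determinant. This determinant is the whole difficulty. Setting $u_\rho=c^2+cd+d^2\rho^2-b(e+f\rho^2)$, the relations (\ref{abcd}) give the two simplifications $\zeta^\rho_2[3]-\tfrac{cd}{g}\zeta^\rho_3[3]=b\rho^2-\tfrac{c(c+d)\rho^2}{e+f\rho^2}$ and $r^2u_s-s^2u_r=(c^2+cd-be)(r^2-s^2)$, after which the determinant factors as a nonzero constant times a power of $(r^2-s^2)$ times one last factor. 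The main obstacle is to show that this last factor does not vanish for $r\neq s$; this is where $ab>1$ and the precise form of $e,f,g,h,k$ are indispensable, and it is exactly the step the text defers as \lq a straightforward calculation\rq.

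The second assertion runs in parallel using (\ref{prg_perp}): there the pivots $1$ of $|\eta^r_1\rangle,|\eta^r_2\rangle,|\eta^r_3\rangle$ sit in entries $8,7,6$, the same pairing $b_i=-a_i$ appears, the differences $|\eta^r_i\rangle-|\eta^s_i\rangle$ are supported on entries $1,2,3$, and one is left with an analogous $3\times3$ determinant whose non-vanishing for $r\neq s$ yields $\dim(\spa P_r^\Gamma\cap\spa P_s^\Gamma)=2$ with the stated spanning vectors $\eta_4,\eta_5$.
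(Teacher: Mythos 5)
Your proposal follows the paper's own route: the paper likewise notes that $\zeta_4,\zeta_5$ are orthogonal to every $|\zeta^r_i\rangle$ (hence lie in $\spa P_r\cap\spa P_s$) and then simply asserts that the eight vectors $\zeta_1^r,\zeta_2^r,\zeta_3^r,\zeta_1^s,\zeta_2^s,\zeta_3^s,\zeta_4,\zeta_5$ are linearly independent for $r\neq s$, deferring the verification exactly as you do. Your reduction via the pivot entries $5,6,7$ to a single $3\times 3$ determinant in the differences $|\zeta^r_i\rangle-|\zeta^s_i\rangle$ is a correct and in fact more explicit organization of that deferred computation, so the two arguments are essentially the same.
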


\begin{proof}
We note that $\zeta_4$ and $\zeta_5$ are orthogonal to the span of $\zeta^r_i$ for any $r>0$ and $i=1,2,3$.
It is straightforward to see that the following eight vectors
$$
\zeta_1^r,\ \zeta_2^r,\ \zeta_3^r,\ \zeta_1^s,\ \zeta_2^s,\ \zeta_3^s,\ \zeta_4,\ \zeta_5
$$
are linearly independent whenever $r\neq s$. The exactly same thing holds for the intersection $\spa P_r^{\Gamma}$ and $\spa P_s^{\Gamma}$.
\end{proof}

Therefore, we see that the sum of $\spa P_r$ and $\spa P_s$ must be the full space $\mathbb C^2\otimes \mathbb C^4$,
whenever $r\neq s$.
We take an interior point $\varrho_r$ of $\mathbb F_r$ and an interior point $\varrho_s$ of $\mathbb F_s$ then
$$
\varrho=\lambda\varrho_r +(1-\lambda)\varrho_s
$$
has full rank whenever $0<\lambda<1$, and the same is true for its partial transpose $\varrho^\Gamma$. 
Therefore, $\varrho$ is a boundary separable state with full ranks.
More precisely, if we take five points from each of the circles $C_r$ and $C_s$ then the convex combination 
of the corresponding ten pure product states with nonzero coefficients is a boundary separable state with full ranks.
We can go further to take four points from  each of  the circles $C_r$ and $C_s$ to get the separable states with the same properties.
These give us boundary separable states with full ranks whose lengths coincide with the ranks.
We recall that the length of a point $x$ of a convex set is given by the minimum number of extreme points
of which $x$ is a convex combination.

\begin{proposition}
Let $|z_{\alpha_i}\rangle$ {\rm (}respectively $|z_{\beta_i}\rangle${\rm )} be any four mutually distinct vectors from $P_r$ {\rm (}respectively $P_s${\rm )} with $r\neq s$.
Then these eight vectors are linearly independent if and only if the condition holds:
\begin{equation}\label{arg_cond}
\sum_{i=1}^4 \text{\rm arg}(\alpha_i)\neq \sum_{i=1}^4 \text{\rm arg}(\beta_i) \mod 2\pi.
\end{equation}
The same is true for $P_r^\Gamma$ and $P_s^\Gamma$ with $r\neq s$.
\end{proposition}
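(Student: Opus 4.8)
The plan is to reduce linear independence of the eight vectors to a clean geometric condition on the one-dimensional \emph{overlap directions} of the two spans, and then to read off the argument condition (\ref{arg_cond}) from the top symmetric function $e_4$. Set $U_r=\spa\{|z_{\alpha_i}\rangle:i=1,2,3,4\}$ and $U_s=\spa\{|z_{\beta_j}\rangle:j=1,2,3,4\}$. Four distinct product vectors from a single circle are linearly independent (recorded in Section~4; equivalently, on $|\alpha|=r$ the scaled entries are an injective image of the moment vectors $(1,\alpha_i,\dots,\alpha_i^4)^{\rm t}$, which are Vandermonde), so each of $U_r,U_s$ is $4$-dimensional. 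Hence the eight vectors span $\mathbb C^2\ot\mathbb C^4$, and are therefore linearly independent, if and only if $U_r\cap U_s=\{0\}$. Since $U_r\subset\spa P_r$ and $U_s\subset\spa P_s$, Proposition~\ref{jgjghj} confines the intersection to the two-dimensional space $U_r\cap U_s\subset\spa P_r\cap\spa P_s=\spa\{\zeta_4,\zeta_5\}$, so it suffices to decide whether a nonzero $\lambda\zeta_4+\mu\zeta_5$ can lie in both $U_r$ and $U_s$.

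Next I would work in monomial coordinates. On $|\alpha|=r$ one has $\bar\alpha=r^2/\alpha$, so multiplying $|z_\alpha\rangle$ by $\alpha^2$ turns its entries into $A_r(1,\alpha,\alpha^2,\alpha^3,\alpha^4)^{\rm t}$ for a fixed injective $8\times5$ matrix $A_r$ whose column space is $\spa P_r$. Thus $A_r^{-1}$ identifies $U_r$ with the hyperplane of $\mathbb C^5$ spanned by the moment vectors, whose annihilating functional is exactly the coefficient vector $(e_4,-e_3,e_2,-e_1,1)$ of $\prod_i(t-\alpha_i)$, where $e_k=e_k(\alpha_1,\dots,\alpha_4)$. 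A direct solve of $A_r x=\zeta_4$ and $A_r x=\zeta_5$ shows that $\zeta_4$ and $\zeta_5$ occupy the two \emph{extreme} monomial slots of $\spa P_r$: in these coordinates $\zeta_4\leftrightarrow -\tfrac1{cr^4}(1,0,0,0,0)$ and $\zeta_5\leftrightarrow -(0,0,0,0,1)$. Pairing against the functional above therefore gives
$$
\lambda\zeta_4+\mu\zeta_5\in U_r \iff \lambda\, e_4(\alpha_1,\dots,\alpha_4)+\mu\, c\, r^4=0,
$$
and the identical computation on the circle of radius $s$ yields $\lambda\, e_4(\beta_1,\dots,\beta_4)+\mu\, c\, s^4=0$.

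Finally I would note that these two homogeneous equations in $(\lambda,\mu)$ admit a common nonzero solution precisely when their coefficient vectors $(e_4(\alpha),\,c r^4)$ and $(e_4(\beta),\,c s^4)$ are proportional, i.e. when $e_4(\alpha)\,s^4=e_4(\beta)\,r^4$. Writing $\alpha_i=re^{i\theta_i}$ and $\beta_j=se^{i\phi_j}$ gives $e_4(\alpha)=r^4e^{i\sum\theta_i}$ and $e_4(\beta)=s^4e^{i\sum\phi_j}$, so this proportionality collapses to $e^{i\sum\theta_i}=e^{i\sum\phi_j}$, that is, to $\sum\arg\alpha_i\equiv\sum\arg\beta_j\pmod{2\pi}$. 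Hence $U_r\cap U_s\neq\{0\}$ exactly in that case, and the eight vectors are linearly independent if and only if (\ref{arg_cond}) holds. The case of $P_r^\Gamma,P_s^\Gamma$ runs verbatim, replacing $\spa P_r\cap\spa P_s$ by $\spa P_r^\Gamma\cap\spa P_s^\Gamma=\spa\{\eta_4,\eta_5\}$ and the coordinates $(\bar\alpha^2,\dots,\alpha^2)$ by $(\bar\alpha,\dots,\alpha^3)$; the top symmetric function again supplies the phase $e^{i\sum\theta_i}$, so the same condition (\ref{arg_cond}) results.

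The main obstacle is the bookkeeping in the second paragraph: one must verify that $\zeta_4$ and $\zeta_5$ genuinely sit in the extreme monomial slots, so that pairing with the normal functional isolates $e_4$ against a coefficient that is a \emph{positive real} constant carrying no $\alpha$-dependence, and moreover that this constant equals $c\,r^4$ (and $c\,s^4$ on the other circle) with the \emph{same} factor $c$, so that the radius powers cancel correctly against $e_4=r^4e^{i\sum\theta_i}$. Once these two normalizations are pinned down, the reduction of independence to the single scalar identity $e_4(\alpha)/r^4=e_4(\beta)/s^4$, and thence to the modulus-one statement (\ref{arg_cond}), is immediate.
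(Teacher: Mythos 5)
Your argument for the vectors in $P_r$ and $P_s$ is correct, and it shares the paper's skeleton: both reduce linear independence to $U_r\cap U_s=\{0\}$, both use Proposition \ref{jgjghj} to confine that intersection to $\spa\{\zeta_4,\zeta_5\}$, and both finish with a $2\times 2$ system whose determinant vanishes exactly when $e^{i\sum\theta_i}=e^{i\sum\tau_i}$. Where you differ is in how the one extra linear condition on $\lambda\zeta_4+\mu\zeta_5$ is extracted: the paper computes an explicit fourth vector $|\zeta_6^r\rangle$ in the orthogonal complement of $Q_r$, with entries built from the symmetric functions $\Theta_1,\dots,\Theta_5$, and pairs Hermitianly; you instead pass to monomial coordinates and pair against the polynomial annihilator $(e_4,-e_3,e_2,-e_1,1)$ of the moment vectors. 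Your route is cleaner and explains structurally why only $e_4=\prod\alpha_i$ survives. I checked the two normalizations you flag as the "main obstacle": on $|\alpha|=r$ the $\bar\alpha^2$-column and the $\alpha^2$-column of the coefficient matrix of $|z_\alpha\rangle$ are indeed $-cr^4\,\zeta_4$ and $-\zeta_5$, so the equation $\lambda e_4(\alpha)+\mu cr^4=0$ and the cancellation of the radius powers are right.

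The genuine gap is the last sentence, where you assert the $\Gamma$ case "runs verbatim." It does not, by your own method. For $|z_\alpha\rangle^\Gamma=(1,\alpha)^{\rm t}\ot|y_\alpha\rangle$ the monomial range on $|\alpha|=r$ is $\bar\alpha,1,\alpha,\alpha^2,\alpha^3$, i.e. powers $-1$ through $3$, which is \emph{not} symmetric about $0$. The extreme slots are now occupied by $\eta_5$ and $\eta_4$ with normalizations $-cr^2$ (the $\bar\alpha$-column, coming solely from the fourth entry $-c\bar\alpha-dr^2$ of $|y_\alpha\rangle$) and $-1$ (the $\alpha^3$-column $(0,0,0,0,-g,-cd,0,0)^{\rm t}$). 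Pairing with the same annihilator gives $\lambda c r^2+\mu e_4(\alpha)=0$, i.e. $\lambda c+\mu r^2e^{i\sum\theta_i}=0$, and the radius powers no longer cancel: the two lines in $\spa\{\eta_4,\eta_5\}$ have "slopes" $r^2e^{i\sum\theta_i}$ and $s^2e^{i\sum\tau_i}$ of different moduli, so they never coincide for $r\neq s$. Your method therefore yields that the eight partial conjugates are \emph{always} linearly independent, not that independence is equivalent to (\ref{arg_cond}). You need to either locate an error in this bookkeeping or accept that the "only if" direction fails for $P_r^\Gamma,P_s^\Gamma$; note that the paper offers no computation here either (it only says the $\Gamma$ case "can be proved similarly"), so this is a point you must resolve rather than inherit.
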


\begin{proof}
Denote by $\theta_i$ (respectively $\tau_i$) the argument of $\alpha_i$ (respectively, $\beta_i$) for $i=1,2,3,4$.
Let $Q_r$ and $Q_s$ be the subspace spanned by $\{|z_{\alpha_1}\rangle,\,|z_{\alpha_2}\rangle,\,|z_{\alpha_3}\rangle,\,|z_{\alpha_4}\rangle\}$
and $\{|z_{\beta_1}\rangle,\,|z_{\beta_2}\rangle,\,|z_{\beta_3}\rangle,\,|z_{\beta_4}\rangle\}$, respectively.
Then the orthogonal complement of $Q_r$ is of $4$-dimensional subspace generated by
$|\zeta_1^r\rangle,\,|\zeta_2^r\rangle,\,|\zeta_3^r\rangle$ in \eqref{pr_perp} together with the vector $|\zeta_6^r\rangle$ defined by
\[
\Bigl(\dfrac{c\Theta_4 \bigl[u+cd(\Theta_1 r-1) \bigr]}{g u},\,
-\dfrac{c\Theta_4(\Theta_1 r-1)}{u},\,
\dfrac{r^2\Theta_5}{c(e+fr^2)u},
-\dfrac{r(c\Theta_1+dr) }{c},\,0,\,0,\,0,\,1
\Bigr),
\]
where
\[
\begin{aligned}
\Theta_1&:=\sum_{1\le j\le4} e^{-i\theta_j}, & \hskip -4truecm
\Theta_2&:=\sum_{1\le j<k\le 4} e^{-i(\theta_j+\theta_k)},\\
\Theta_3&:=\sum_{1\le j<k<\ell\le 4}e^{-i(\theta_j+\theta_k+\theta_{\ell})}, & \hskip -4truecm
\Theta_4&:=e^{-i(\theta_1+\theta_2+\theta_3+\theta_4)},\\
\Theta_5&:= \bigl(c^3d r\Theta_1 +c^2 u\Theta_2 +cdr u\Theta_3\bigr)\Theta_4 -c^3 d\Theta_4+d^2 r^2 u.
\end{aligned}
\]
If $|w\rangle$ belong to $Q_r\cap Q_s$, then $|w\rangle$ should be orthogonal to all of the vectors $|\zeta_i^r\rangle$
and $|\zeta_i^s\rangle$ for $i=1,2,3,6$.

If $|w\rangle$ belongs to the orthogonal complement of the span of 
$\{|\zeta_i^r\rangle,\,|\zeta_i^s\rangle: i=1,2,3\}$,
then $|w\rangle$ is of the form
\[
\gamma_1\, (0,\,0,\,0,\,0,\,0,\,0,\,0,\,1)^{\rm t}+\gamma_2\, (g,\, cd,\, 0,\,0,\,0,\,0,\,0,\,0)
\]
for some $\gamma_1,\,\gamma_2\in \mathbb C$ by Proposition \ref{jgjghj}. We solve the following equations
\[
\begin{aligned}
\langle \zeta_6^r|w\rangle&=\gamma_1+ c e^{i (\theta_1+\theta_2+\theta_3+\theta_4)} \gamma_2=0,\\
\langle \zeta_6^s|w\rangle&=\gamma_1+ c e^{i (\tau_1+\tau_2+\tau_3+\tau_4)} \gamma_2=0,
\end{aligned}
\]
to conclude that $\gamma_1=\gamma_2=0$ if and only if
the condition (\ref{arg_cond}) holds.
The statement for $P_r^\Gamma$ and $P_s^\Gamma$ can be proved similarly.
\end{proof}

If we take four complex numbers from each of the different circles $C_r$ and $C_s$ with the condition (\ref{arg_cond}) then the
convex combination of their corresponding pure product states with nonzero coefficients 
is a boundary separable state with full ranks.
If we take four complex numbers from a circle and five from another circle then we have the same kinds of separable states.

In the remainder of this note, we also consider the circles $L^\theta$ which are perpendicular to the equator.
They also have the same properties as the family of circles $\{C_r:r>0\}$. We just state the results here.
They are lines
$\arg \alpha=\theta$ in the complex
plane through the origin together with the point $\infty$ in the Riemann sphere, for a given $\theta$. We denote by
$$
P^\theta:=\{|z_\alpha\rangle\in\mathbb C^2\ot\mathbb C^4: \alpha\in L^\theta\}.
$$
It turns out that the convex hull $\mathbb F^\theta$ of $\{|z_\alpha\rangle\langle z_\alpha|: \alpha\in L^\theta\}$ is also a face of $\{\Phi\}^\prime$
which coincides with the face $\tau(\spa P^\theta,\spa (P^\theta)^\Gamma)$ of $\mathbb T$. Both
$\spa P^\theta$ and $\spa (P^\theta)^\Gamma$ are $5$-dimensional. We note that $P^0=(P^0)^\Gamma$ for $\theta=0$.
The intersection of the spans of $P^\theta$ and $P^\tau$ for $\theta\neq\tau$ is the $2$-dimensional space
spanned by $|z_0\rangle$ and $|z_\infty\rangle$, which coincides with the intersection of $\spa (P^\theta)^\Gamma$ and $\spa (P^\tau)^\Gamma$.

For given $\tau\neq \theta$, we take four complex numbers $r_ke^{i\theta}$ from $L^\theta$ and
$s_ke^{i\tau}$ from $L^\tau$ for $k=1,2,3,4$, then the corresponding eight product vectors in $P^\theta\cup P^\tau$ are linearly
independent if and only if $r_1r_2r_3r_4\neq s_1s_2s_3s_4$.
Therefore, we also see that four complex numbers from $L^\theta$ together with five complex numbers from $L^\tau$ for $\tau\neq\theta$
give rise to boundary separable states with full ranks. There is one difference from the family $\{\mathbb F_r: r>0\}$ of faces:
The intersection of two faces $\mathbb F^\theta$ and $\mathbb F^\tau$ with different $\theta$ and $\tau$ is
a line segment with the endpoints $|z_0\rangle\langle z_0|$ and $|z_\infty\rangle\langle z_\infty|$.

It is interesting to note that two circles from different families do not give rise to separable states
with full ranks. For example, the span of product vectors in $P_1\cup P^0$ does not span the whole space.


We close this paper with some comments. The map $\Phi[a,b,c,d]$ which we have constructed has a very special property:
The range of every rank one projection is singular with one-dimensional kernel. 
This property tells us that extreme points of the dual face are parametrized by the Riemann sphere. 
We suspect this property
might have close relations with the notions like exposedness, non-extendibility and/or the condition (\ref{dim_cond})
for indecomposable positive maps, especially when the domain is $2\times 2$ matrices. 
Note that exposed indecomposable positive maps in $M_3$ considered in \cite{ha-kye_exp_Choi}
do not have this property.

In this paper, we found faces $\mathbb F_r$ of $\{\Phi\}^\prime$ whose extreme points are parametrized by circles.
There must be bunch of faces between $\mathbb F_r$ and $\{\Phi\}^\prime$. It would be very interesting to characterize
the whole lattice structures of faces for the convex set $\{\Phi\}^\prime$ which is a maximal face of the whole convex set
$\mathbb S$. Especially,
it is also unclear yet if the interior of the convex hull of $\mathbb P_r$ and $\mathbb P_s$ with $r\neq s$ lies 
in the interior or on the boundary of $\{\Phi\}^\prime$.

The lengths of the $3\otimes 3$ boundary separable states with full ranks constructed 
in \cite{ha-kye_unique} are strictly greater than ranks.
On the other hand, we have constructed same kinds of $2\otimes 4$ separable states whose lengths coincide with ranks. 
We could not determine the lengths of separable states which were given by convex combinations of 
nine or ten pure product states.

\end{document}